\documentclass[12pt,a4paper]{amsart}%
\usepackage{amsmath}
\usepackage{amsfonts}
\usepackage{amssymb}
\usepackage{graphicx}%
\usepackage{comment}
\providecommand{\U}[1]{\protect\rule{.1in}{.1in}}
\newtheorem{theorem}{Theorem}

\newtheorem{example}[]{Example}

\newtheorem{proposition}[theorem]{Proposition}

\allowdisplaybreaks

\begin{document}

\title{Jittered sampling and probability measures}
\author[R. Bramati]{Roberto Bramati}
\address{Dipartimento di Ingegneria Gestionale, dell'Informazione e della Produzione,
Universit\`{a} degli Studi di Bergamo, Viale Marconi 5, 24044 Dalmine BG, Italy}
\email{roberto.bramati@unibg.it}
\author[L. Brandolini]{Luca Brandolini}
\address{Dipartimento di Ingegneria Gestionale, dell'Informazione e della Produzione,
Universit\`{a} degli Studi di Bergamo, Viale Marconi 5, 24044 Dalmine BG, Italy}
\email{luca.brandolini@unibg.it}
\author[G. Travaglini]{Giancarlo Travaglini}
\address{Dipartimento di Matematica e Applicazioni, Universit\`{a} di Milano-Bicocca,
Via Cozzi 55, 20125 Milano, Italy}
\email{giancarlo.travaglini@unimib.it}
\subjclass[2010]{Primary 11K38, 42B10}
\keywords{Geometric discrepancy, Fourier transforms, Dirac measures, jittered smapling, probability measures, zeros of Bessel functions}

\begin{abstract}
This paper investigates the discrepancy of a family of random sampling methods
obtained by perturbing the grid $\frac{1}{M}\mathbb{Z}^{d}\cap\left[
-1/2,1/2\right)^{d}$, where $M$ is a large positive integer. Parameterized
by an arbitrary probability measure $\mu$, this family encompasses
several classical methods for evaluating the quality of an $N$-point set in
$\mathbb{T}^{d}$, where $N=M^{d}$. We show that all probability
measures, except for Dirac measures, behave like the Lebesgue measure in the Monte Carlo
discrepancy. This represents a limiting case where the
measure $\mu$ depends on $M$. In this latter context, we prove that, up to a constant, the
lowest possible discrepancy is achieved when the support of $\mu$ has diameter $\leq c/M$, and that this upper bound is sharp.
\end{abstract}
\maketitle

The comparison between continuous and discrete objects is a key challenge in many fields,
including analysis, number theory, and numerical analysis. A classical problem
arises when considering a reasonably large family of geometric sub-regions
within a cube;\ the goal is to select $N$ points in the cube such that the
number of points falling in each sub-region closely approximates $N$ times its volume.

In the following, $d\geq2$ is the space dimension, $B=\left\{  t\in\mathbb{R}^{d}:\left\vert
t\right\vert \leq1\right\}$ is the unit ball, $\left\vert K\right\vert $ denotes the
volume of a measurable set $K$, and 
\[\chi
_{K}(t):=\left\{
\begin{array}
[c]{cc}%
1 & \text{if \ }t\in K\\
0 & \text{if \ }t\notin K
\end{array}
\right.\] is the characteristic (or indicator) function of $K$. We identify the $d$-dimensional torus $\mathbb{T}^{d}=\mathbb{R}^{d}/\mathbb{Z}^{d}$ with the cube
\[
Q=\left[  -1/2,1/2\right)^{d}.\] 
Here, $c,c_{1},\ldots$ denote positive constants that may change from step to
step. The notation $a_{N}=\mathcal{O}\left(  b_{N}\right)  $ means that there exists a
constant $c>0$ such that $\left\vert a_{N}\right\vert \leq c\left\vert
b_{N}\right\vert $, whereas $a_{N}=\mathit{o}\left(  b_{N}\right)  $ means
that $\left(  a_{N}/b_{N}\right)  \rightarrow0$.$\ $Moreover, $j_{N}\approx
k_{N}$ means that there exist two positive constants $c_{1},c_{2}$ such that
$\ c_{1}\left\vert j_{N}\right\vert \leq\left\vert k_{N}\right\vert \leq
c_{2}\left\vert j_{N}\right\vert $ for every $N$, whereas $j_{N}\sim k_{N}$
means $\left(  j_{N}/k_{N}\right)  \rightarrow1$. We write $a_{N}%
=\Omega\left(  b_{N}\right)  $ if there does not exist a constant $c>0$ such
that $\left\vert a_{N}\right\vert \leq c\left\vert b_{N}\right\vert $ for all
$N$ (that is, $a_{N}/b_{N}$ is unbounded).

Our starting point is the following result, which is due to W. Schmidt, H.
Montgomery, and J. Beck (see \cite{schmidt}, \cite{montgomery0},
\cite{Beck88}, \cite{montgomery}). It shows that a \textquotedblleft
large\textquotedblright\ error is unavoidable when evaluating the quality of
an arbitrary set of $N$ points with respect to the characteristic functions of balls. 

\begin{theorem}
\label{SMBT} There exists a positive
constant $c$ such that, for any choice of $N$\ points $z_{1},\ldots,z_{N}$ in
$\mathbb{T}^{d}$, we have%
\begin{equation}
\int_{0}^{1/2}\int_{\mathbb{T}^{d}}\left\vert \sum_{j=1}^{N}\chi_{rB-t}\left(
z_{j}\right)  -Nr^{d}\left\vert B\right\vert \right\vert ^{2}~dtdr\geq
cN^{1-1/d}. \label{SMB}%
\end{equation}
\end{theorem}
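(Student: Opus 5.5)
The plan is to follow the Fourier-analytic approach of Montgomery and Beck. For fixed $r$, the discrepancy function
\[
D_r(t)=\sum_{j=1}^{N}\chi_{rB-t}(z_j)-Nr^d|B| = \sum_{j=1}^{N}\chi_{rB}(z_j+t)-Nr^d|B|
\]
is a $\mathbb{Z}^d$-periodic function of $t$. Its Fourier coefficients are
\[
\widehat{D_r}(m)=\widehat{\chi_{rB}}(m)\sum_{j}e^{2\pi i m\cdot z_j}\quad\text{for } m\neq0,
\]
with up to a sign convention, and $\widehat{D_r}(0)=0$. By Parseval,
\[
\int_{\mathbb{T}^d}|D_r|^2\,dt=\sum_{m\neq0}|\widehat{\chi_{rB}}(m)|^2\,\Big|\sum_j e^{2\pi i m\cdot z_j}\Big|^2 .
\]
We use $\widehat{\chi_{rB}}(\xi)=r^{d/2}|\xi|^{-d/2}J_{d/2}(2\pi r|\xi|)$ and integrate over $r\in(0,1/2)$. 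The classical asymptotics of Bessel functions give, for $|m|\geq1$,
\[
\int_0^{1/2}|\widehat{\chi_{rB}}(m)|^2\,dr\geq c|m|^{-d-1}.
\]
This is because $J_{d/2}^2(s)\approx s^{-1}\cos^2(s-\cdot)$, so its average over an interval of length $\gtrsim1$ in $s=2\pi r|m|$ is comparable to $s^{-1}$. The main point to verify is that the zeros of the Bessel function do not destroy this average; averaging over $r$ is exactly what takes care of them.

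It therefore suffices to prove the lower bound
\[
\sum_{0<|m|\le R}|m|^{-d-1}\Big|\sum_j e^{2\pi i m\cdot z_j}\Big|^2\geq cN^{1-1/d}
\]
for a suitable $R\approx N^{1/d}$. For this we introduce a nonnegative auxiliary kernel. Let $\varphi$ be a smooth bump supported in a ball of radius $\delta N^{-1/d}$ with $\widehat\varphi\geq0$, for instance $\varphi=\psi*\tilde\psi$, and periodize it to $\Phi$ on $\mathbb{T}^d$. Then
\[
\sum_{j,k}\Phi(z_j-z_k)=\sum_m\widehat\varphi(m)\Big|\sum_je^{2\pi i m\cdot z_j}\Big|^2 .
\]
On the left, the diagonal terms contribute at least $N\varphi(0)\approx N\cdot N\delta^{-d}$, and all terms are $\geq0$. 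On the right, the $m=0$ term equals $N^2\widehat\varphi(0)\approx N^2\cdot N^{-1}\delta^{d}\cdot(\text{const})$, which is much smaller than $N\varphi(0)$ once $\delta$ is small.

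This leaves
\[
\sum_{m\ne0}\widehat\varphi(m)\Big|\sum_j e^{2\pi i m\cdot z_j}\Big|^2\gtrsim N\varphi(0)\|\varphi\|_1\approx N,
\]
after normalizing $\|\varphi\|_1\approx N^{-1}$, i.e. $\widehat\varphi(m)\lesssim N^{-1}$ concentrated on $|m|\lesssim N^{1/d}$. For $|m|\le cN^{1/d}$ we have $\widehat\varphi(m)\lesssim N^{-1}\lesssim N^{-1}(N^{1/d}/|m|)^{d+1}$. Because $\widehat\varphi$ decays rapidly, it is dominated by $C N^{-1}(N^{1/d}/|m|)^{d+1}$ for all $m$. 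Comparing the two weighted sums then gives
\[
\sum_{m\neq0}|m|^{-d-1}\Big|\sum_j e^{2\pi i m\cdot z_j}\Big|^2\geq cN\cdot N^{-(d+1)/d}\cdot N=cN^{1-1/d}.
\]
(One tail issue needs care here. The sum with weight $|m|^{-d-1}$ is infinite-range, while the Bessel bound above applies to all $m\ne0$ with the same constant, so no truncation is actually needed.)

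The step I expect to be the main obstacle is the uniform lower bound $\int_0^{1/2}|\widehat{\chi_{rB}}(m)|^2dr\geq c|m|^{-d-1}$, valid for every $m\neq0$ including small $|m|$. For large $|m|$ it follows from the asymptotic expansion of $J_{d/2}$. For the finitely many small $m$ it follows from positivity and continuity, since $J_{d/2}$ does not vanish identically on any interval.
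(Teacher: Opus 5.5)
The paper does not prove Theorem~\ref{SMBT} itself. It quotes the result from Schmidt, Montgomery and Beck and only remarks that Montgomery derives it from an argument of Cassels. Your proposal is a correct version of exactly that route: Parseval in $t$, the uniform bound $\int_0^{1/2}|\widehat{\chi}_{rB}(m)|^2\,dr\geq c|m|^{-d-1}$ for all $m\neq0$, and Cassels' positivity argument. The one variation is the kernel. You use the nonnegative, compactly supported kernel $\psi\ast\tilde\psi$ at scale $\delta N^{-1/d}$, which needs the decay bound $\widehat\varphi(m)\lesssim N^{1/d}|m|^{-d-1}$ (you supply it), in place of the Fej\'er kernel with spectrum in a box of side $\approx N^{1/d}$, where the comparison of weights is immediate.

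The only blemish is bookkeeping. Your displayed sizes of the diagonal term, of the $m=0$ term and of ``$N\varphi(0)\|\varphi\|_1$'' mix the $L^1$- and $L^\infty$-normalizations of $\varphi$. In either normalization, however, the $m=0$ term is about $\delta^d$ times the diagonal contribution $N\varphi(0)$. So with $\varphi(0)\approx1$ you get $\sum_{m\neq0}\widehat\varphi(m)\,|\sum_j e^{2\pi i m\cdot z_j}|^2\gtrsim N$, and the argument goes through. You should also state $\psi\geq0$ explicitly, since that is what makes the off-diagonal terms nonnegative.
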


The integrand in \eqref{SMB} represents the \textquotedblleft
discrepancy\textquotedblright\ between the number of points $z_{j}$ falling in
the ball $rB-t$ and the expected value $Nr^{d}\left\vert B\right\vert $, that
is, $N$ times the volume of each ball $rB-t$.

The Schmidt--Montgomery--Beck theorem is naturally compared to a celebrated
result of K. Roth (see \cite{roth}), in which balls can be replaced by cubes (see also \cite{montgomery}), and the
lower bound $N^{1-1/d}$ in \eqref{SMB} becomes a logarithmic term. This
comparison suggests a geometric
insight into the problem of choosing $N$ points in a cube. 

We recall that H. Montgomery (see \cite{montgomery}) has derived both Theorem
\ref{SMBT} and Roth's theorem from a single argument originally due to J.
Cassels (see \cite{Cassels57}; see also \cite{BBG} for a very general form of
Cassels' result).

\medskip
There are several ways to prove that the lower bound $N^{1-1/d}$ in
\eqref{SMB} is sharp: the integer points approach of D. Kendall in
\cite{kendall} (see also \cite[p.210]{BGT14}); the jittered sampling method by
J. Beck and W. Chen (see e.g. \cite{Chen04}); and the combinatorial arguments
by J. Matousek \cite[5.4]{matousek} and W. Chen \cite{Chen96} (see also
\cite{BCCGT}).
In this paper we show that the upper bound $N^{1-1/d}$ applies to a large
family of examples arising from certain probability measures on $\mathbb{T}%
^{d}$.
\medskip

Let $\mu$ be a probability measure on $\mathbb{T}^{d}$. For every
$m\in\mathbb{Z}^{d}$ we consider the $m$-th Fourier coefficient of $\mu$
\[
\widehat{\mu}\left(  m\right)  :=\int_{\mathbb{T}^{d}}e^{-2\pi im\cdot
t}\ d\mu\left(  t\right).
\]
Notice that $\mu$ can also be regarded as a measure on $\mathbb{R}^d$ supported in the cube $Q$. Therefore we can also define, for every $\xi\in\mathbb{R}^d$,
\[
\widehat{\mu}\left(  \xi\right):= \int_{\mathbb{R}^{d}}e^{-2\pi i\xi\cdot
t}\ d\mu\left(  t\right) \ .
\]
The two definitions coincide whenever $\xi\in\mathbb{Z}^d$.
If $\mu$ is absolutely continuous with respect to the Lebesgue measure, that is, $\mu\left(  t\right)  =f\left(  t\right)  dt$, with $f\in L^{1}\left(
\mathbb{T}^{d}\right)  $ non-negative and $\int_{\mathbb{T}^{d}}f\left(
t\right)  \ dt=1$, then
\[
\widehat{\mu}\left(  m\right)  =\widehat{f}\left(  m\right)  :=\int
_{\mathbb{T}^{d}}f\left(  t\right)  e^{-2\pi im\cdot t}\ dt\ .
\]
For every positive integer $M$, let $N=M^{d}$ and consider the finite set
\begin{equation}
T_{N}=\left\{  t_{j}\right\}  _{j=1}^{N}:=\frac{1}{M}\mathbb{Z}^{d}\cap Q . \label{T_N}%
\end{equation}
For every $j=1,\ldots,N$, we translate $\mu$ by $t_{j}$, that is we consider
the measure $\mu_{j}$ defined by%
\[
\int_{\mathbb{T}^{d}}g\left(  t\right)  \ d\mu_{j}\left(  t\right)
:=\int_{\mathbb{T}^{d}}g\left(  t-t_{j}\right)  \ d\mu\left(  t\right),
\]
where $g\left(  t\right)  $ is any continuous function on $\mathbb{T}^{d}$.

Let $V_{N}=\{v_{1},\ldots,v_{N}\}\subset\mathbb{T}^{d}$. For $t\in
\mathbb{T}^{d}$ and $0<r<1/2$, we introduce the \textit{discrepancy function}%
\begin{equation}
D_{V_{N}}\left(  t\right)  :=\sum_{j=1}^N\chi_{rB-t}\left(  v_{j}\right)-Nr^{d}\left\vert B\right\vert   \ , \label{asterisco}%
\end{equation}
which has the Fourier series%
\[
\sum_{0\neq m\in\mathbb{Z}^{d}}\left(  \sum_{j=1}^{N}e^{2\pi im\cdot v_{j}%
}\right)  \widehat{\chi}_{rB}\left(  m\right)  e^{2\pi im\cdot t}%
\]
(see e.g. \cite[p.205]{BGT14}).

\medskip

In this paper, we study the following \textquotedblleft generalized
discrepancy\textquotedblright, introduced in \cite[p.290]{CT09} (see also
\cite{BChGT}, \cite[p.213]{BGT14}, \cite{BCCGT} and \cite{TRA14}):%
\begin{equation}
D_{\mu}\left(  N\right)  :=\left\{  \int_{\mathbb{T}^{d}}\cdots
\int_{\mathbb{T}^{d}}\int_{\mathbb{T}^{d}}D_{V_{N}}^{2}\left(  t\right)
\ dtd\mu_{1}\left(  v_{1}\right)  \cdots d\mu_{N}\left(  v_{N}\right)
\right\}  ^{1/2}\ , \label{def_mu}%
\end{equation}
where $V_N=\{v_{1}%
,\ldots,v_{N}\}$ denotes the set of integration variables, and $D_{V_N}$ is the associated discrepancy. Here, we replace the Lebesgue measure used in the
Monte Carlo method with an arbitrary probability measure $\mu$ on
$\mathbb{T}^{d}$. This step warrants an explanation:\ since a probability measure
$\mu$ distinct from the Lebesgue measure $dt$ is not uniform, we must
determine its placement within $\mathbb{T}^{d}$. Specifically, we position
$N=M^{d}$ translated copies of $\mu$, each centered at one of the points in $T_{N}$ (see \eqref{T_N}). We call \eqref{def_mu} the 
generalized discrepancy because it encompasses several classical
methods for evaluating the quality of a set of $N$ points in $\mathbb{T}^{d}$
with respect to a family of balls. Specifically, we highlight the following
special cases.

\begin{example} $\mu\left(  t\right)  =dt$ (the Lebesgue measure). Here each point
$v_{j}$ is chosen independently and uniformly at random in $\mathbb{T}^{d}$.
In this setting $D_{\mu}\left(  N\right)  $ corresponds to the \textit{Monte
Carlo discrepancy}, and it can be easily shown (see e.g. \cite[p.205]{TRA14})
that\textit{ }%
\[
D_{dt}^{2}\left(  N\right)  =N\left(  \left\vert rB\right\vert -\left\vert
rB\right\vert ^{2}\right)  \ .
\]
\end{example}

\begin{example}\label{ex:jitt}
$\mu(t)=\nu_M\left(  t\right)  :=N\chi_{M^{-1}Q}\left(  t\right)  dt$ (the normalized characteristic
function of the small cube $M^{-1}Q$). Each point $v_{j}$ is chosen at random inside the cube $M^{-1}Q+t_{j}$ (see \eqref{T_N} above for the choice of the points $t_{j}$).
In this case $D_{\nu_M}\left(  N\right)  $ is sometimes known as
\textit{jittered sampling discrepancy} and it is known (see
\cite[p.290]{CT09}) that there exists a positive constant $c$ such that
\[
D_{\nu_M}^{2}\left(  N\right)  \sim cN^{1-1/d}\ .
\]
The jittered sampling discrepancy has been studied in several papers. See, among others, \cite{BCT25,Doe22,KP22,PS16,PRS18}. 
\end{example}

\begin{example} $\mu=\delta_{0}$ (the Dirac measure at the origin). Hence we are sampling $\chi_{rB-t}$ over the grid $T_N$ defined in \eqref{T_N}. In this case, $D_{\delta_0
}\left(  N\right)  $ can be termed the \textit{grid discrepancy}. 
This is a slightly less general variant of the following well-known problem on integer points in $\mathbb{R}^d$ (in our example, the dilation parameter is restricted to integer values, whereas in the classical problem it is allowed to be real).
Let $R$ be a large positive number, and let
$RB=\left\{  t\in\mathbb{R}^{d}:\left\vert t\right\vert \leq R\right\}$. In
the case $d=2$, the best possible estimate of the discrepancy
\[
D_{R}:=\sum_{m\in\mathbb{Z}^{2}}\chi_{RB}\left(  m\right)-\pi R^{2}
\]
is a classical and still unsolved problem in number theory, known as the 
\textquotedblleft Gauss circle problem\textquotedblright\ (see e.g.
\cite{Kratzel}). D. Kendall (see \cite{kendall}, see also \cite[p.188]{BGT14})
observed that if we shift $B$ and consider, for every $t\in\mathbb{T}^{d}$,
the discrepancy function%
\[
D_{R}\left(  t\right)  :=\sum_{m\in
\mathbb{Z}^{d}}\chi_{RB-t}\left(  m\right)-R^{d}\left\vert B\right\vert   \ ,
\]
then
\begin{equation}
\int_{\mathbb{T}^{d}}\left\vert D_{R}\left(  t\right)  \right\vert
^{2}\ dt\leq cR^{d-1}\ , \label{Ken}%
\end{equation}
so that, in this example, (\ref{def_mu}) essentially reduces to the LHS in (\ref{Ken}). We recall that it is not possible to replace the power \ $d-1 $ \ in \eqref{Ken}
with a smaller exponent, but we point out that a converse inequality, such as
$\int_{\mathbb{T}^{d}}\left\vert D_{R}\left(  t\right)  \right\vert
^{2}\ dt\geq c_{1}R^{d-1}$, is not always true (see e.g. \cite{PS01},
\cite[p.188]{BGT14}, \cite{BCGT15}, see also \cite{BGGM,montgomery,TRA14,TT} for related problems). In order to obtain a converse to 
\eqref{Ken} we need to integrate over the radius of the ball,
which is precisely what we have seen in \eqref{SMB}.
\end{example}

Our first main result  
demonstrates that $D_{\mu}^{2}\left(  N\right)  \approx N$ if and only if
$\mu$ is not a Dirac measure $\delta_{t_{0}}$ for some $t_{0}\in\mathbb{T}^{d}%
$. See Theorem \ref{indip eps} below.

This result can be viewed as a  \textquotedblleft limiting case\textquotedblright\ (with $\mu$ independent of $M$) of the
following framework: we return to the jittered sampling measure \  $\nu_M\left(  t\right)
=N\chi_{M^{-1}Q%
}\left(  t\right)  dt$ \ discussed above and we replace the measures $\nu_M$ with arbitrary probability
measures $\mu_M$. We obtain the upper bound\  $D_{\mu_M}%
^{2}\left(  N\right)  \leq cN^{1-1/d}$ \ 
whenever the support of $\mu_M$ is suitably small, and mixed results in the
complementary cases. 
See Theorem \ref{dip eps}, Theorem \ref{thm:largeSupport} and Proposition \ref{prop:no_rescaling} below. 

In short, 
in view of Theorem \ref{SMBT}, we are interested
in studying the probability measures $\mu_M$ that satisfy $D_{\mu_M}%
^{2}\left(  N\right)  \leq cN^{1-1/d}$. 
\medskip

We have the following results.

\begin{theorem}
\label{indip eps}Let $\mu$ be a probability measure on $\mathbb{T}^d$, different from any Dirac measure  $\delta_{t_{0}}$. Then
$D_{\mu}^{2}\left(  N\right)  \approx N$.
\end{theorem}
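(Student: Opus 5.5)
The plan is to compute $D_\mu^2(N)$ exactly in Fourier terms, read off the upper bound from decay of $\widehat{\chi}_{rB}$, and get the lower bound from a term that vanishes identically only for Dirac measures. Throughout, $r\in(0,1/2)$ is fixed and constants may depend on $r$ and $\mu$.

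\textbf{Step 1: exact Fourier formula.} By Parseval and the Fourier series of $D_{V_N}$,
\[
\int_{\mathbb{T}^d}D_{V_N}^2(t)\,dt=\sum_{0\neq m\in\mathbb{Z}^d}\Bigl|\sum_{j=1}^N e^{2\pi i m\cdot v_j}\Bigr|^2\,|\widehat{\chi}_{rB}(m)|^2 .
\]
The $v_j$ are independent, with $v_j$ distributed as $u-t_j$ for $u\sim\mu$. Hence
\[
\mathbb{E}\,e^{2\pi i m\cdot v_j}=e^{-2\pi i m\cdot t_j}\,\overline{\widehat{\mu}(m)} .
\]
Expanding $|\sum_j\cdot|^2$ into diagonal and off-diagonal terms gives
\[
\mathbb{E}\Bigl|\sum_j e^{2\pi i m\cdot v_j}\Bigr|^2=N\bigl(1-|\widehat{\mu}(m)|^2\bigr)+|\widehat{\mu}(m)|^2\Bigl|\sum_j e^{-2\pi i m\cdot t_j}\Bigr|^2 .
\]
The exponential sum over the grid $T_N$ equals $N$ if $m\in M\mathbb{Z}^d$ and $0$ otherwise. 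Therefore
\[
D_\mu^2(N)=N\sum_{m\neq0}\bigl(1-|\widehat{\mu}(m)|^2\bigr)|\widehat{\chi}_{rB}(m)|^2+N^2\sum_{0\neq k\in\mathbb{Z}^d}|\widehat{\mu}(Mk)|^2\,|\widehat{\chi}_{rB}(Mk)|^2 .
\]

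\textbf{Step 2: upper bound.} The first term is at most
\[
N\sum_{m\neq 0}|\widehat{\chi}_{rB}(m)|^2=N\bigl(|rB|-|rB|^2\bigr).
\]
For the second term, use $|\widehat\mu|\le1$ and the standard Bessel decay $|\widehat{\chi}_{rB}(\xi)|\le c\,|\xi|^{-(d+1)/2}$. This bounds it by
\[
cN^2M^{-d-1}\sum_{k\neq0}|k|^{-d-1}=cN^{1-1/d}\le cN .
\]
So $D_\mu^2(N)\le cN$ for every probability measure $\mu$, Dirac or not.

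\textbf{Step 3: lower bound.} Both terms are nonnegative, so it suffices to show
\[
C(\mu,r):=\sum_{m\neq0}\bigl(1-|\widehat{\mu}(m)|^2\bigr)|\widehat{\chi}_{rB}(m)|^2>0,
\]
a constant independent of $N$. Let $\Lambda=\{m:|\widehat\mu(m)|=1\}$. Equality in $|\int e^{-2\pi i m\cdot t}d\mu|\le1$ forces $m\cdot t\in\theta_m+\mathbb{Z}$ on $\operatorname{supp}\mu$. From this one checks that $\Lambda$ is a subgroup of $\mathbb{Z}^d$ and that $\mu$ is supported on a coset $t_0+H$, where $H=\{t\in\mathbb{T}^d: m\cdot t\in\mathbb{Z}\ \forall m\in\Lambda\}$ is the annihilator of $\Lambda$. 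If $\Lambda=\mathbb{Z}^d$, then $H=\{0\}$ and $\mu=\delta_{t_0}$. Hence, since $\mu$ is not Dirac, $\Lambda$ is a proper subgroup and $H$ is a nontrivial closed subgroup of $\mathbb{T}^d$.

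Suppose, for contradiction, that $C(\mu,r)=0$. Then $\widehat{\chi}_{rB}(m)=0$ for every $m\notin\Lambda$, so the Fourier series of the periodized indicator $\chi_{rB}$ on $\mathbb{T}^d$ is supported on $\Lambda$. Consequently $\chi_{rB}(\cdot+h)=\chi_{rB}$ almost everywhere for all $h\in H$, in particular for some $h\neq0$. But $r<1/2$ means $rB$ embeds in $\mathbb{T}^d$ as a genuine ball, and no nonzero translate of such a ball coincides with it up to a null set. This contradiction gives $C(\mu,r)>0$ and hence $D_\mu^2(N)\ge C(\mu,r)\,N$.

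The main obstacle is Step 3: identifying $\{|\widehat\mu|=1\}$ as a subgroup whose annihilator carries $\mu$. The route through translation invariance is chosen deliberately, because trying to pick one specific $m\notin\Lambda$ runs into the possible zeros of $J_{d/2}(2\pi r|m|)$. Steps 1 and 2 are routine.
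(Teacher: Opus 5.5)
Your proof is correct. Steps 1--2 are essentially the paper's: the paper quotes the identity \eqref{discr} instead of deriving it, and bounds the aliasing term exactly as in \eqref{fatto}.

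The genuine difference is the lower bound. The paper exhibits an explicit frequency $\widetilde m$ with $J_{d/2}(2\pi r|\widetilde m|)\neq 0$ and $|\widehat\mu(\widetilde m)|<1$. It first uses McMahon's expansion of the zeros $j_{d/2,n}$ to show they contain no infinite arithmetic progression. It then uses the subgroup structure of $H(\mu)$ (via Hewitt--Rubin) to find lattice points $ke_1\notin H(\mu)$ whose norms form an arithmetic progression, so not all of them can be Bessel zeros.

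You instead argue by contradiction through duality. If $\widehat{\chi}_{rB}$ vanished off the proper subgroup $\Lambda$, the periodized ball would be a.e.\ invariant under the nontrivial annihilator $\Lambda^{\perp}$, which is impossible since $r<1/2$.

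What each route buys:
your argument uses nothing about Bessel functions or their zeros, and it works verbatim with $rB$ replaced by any set whose periodized indicator has no nonzero translation period (e.g.\ any convex body of diameter less than $1$). The paper's route is tied to the ball, but it produces a concrete witness frequency and records a number-theoretic fact about the zeros of $J_{d/2}$ along the way.

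The one step you leave implicit is that no nonzero torus translate of $rB$ agrees a.e.\ with $rB$. This is easy. The periodized ball is a locally finite union of disjoint closed balls at mutual distance $1-2r>0$. So the connected interior of a translate lying a.e.\ inside it must lie in a single ball $rB+n$, which forces the translation to equal $n\in\mathbb{Z}^d$. There is no gap.
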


\begin{theorem}
\label{dip eps}Let $\mu_M$ be a sequence of probability measures on $\mathbb{T}^d$. Assume that every measure $\mu_M$ has 
support in a ball of radius $\varepsilon_M=\mathcal{O}\left(M^{-1}\right)$. Then $D_{\mu_M}^{2}\left(  N\right)  \leq cN^{1-1/d}$.
\end{theorem}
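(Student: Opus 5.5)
The plan is to compute $D_{\mu_M}^{2}(N)$ exactly through Fourier series and then estimate the resulting lattice sums using the standard decay of $\widehat{\chi}_{rB}$. Write $v_j=t_j+u_j$, where the $u_j$ are independent with law $\mu_M$. By Parseval and the Fourier expansion of $D_{V_N}$ recalled after \eqref{asterisco}, we have
\[
D_{\mu_M}^{2}(N)=\sum_{0\neq m\in\mathbb{Z}^{d}}\left\vert \widehat{\chi}_{rB}(m)\right\vert^{2}\,\mathbb{E}\Big\vert \sum_{j=1}^{N}e^{2\pi i m\cdot v_j}\Big\vert^{2}.
\]
Expanding the square and using independence, the diagonal terms contribute $N$. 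Each off-diagonal term equals $|\widehat{\mu}_M(m)|^{2}e^{2\pi i m\cdot(t_j-t_k)}$, up to the sign convention in the exponent. Adding and subtracting the missing diagonal gives
\[
\mathbb{E}\Big\vert \sum_{j}e^{2\pi i m\cdot v_j}\Big\vert^{2}=N\big(1-|\widehat{\mu}_M(m)|^{2}\big)+|\widehat{\mu}_M(m)|^{2}\Big\vert\sum_{j}e^{2\pi i m\cdot t_j}\Big\vert^{2}.
\]
The grid sum over $T_N$ equals $N$ when $m\in M\mathbb{Z}^{d}$ and vanishes otherwise. Hence
\[
D_{\mu_M}^{2}(N)=N\sum_{m\neq0}\big(1-|\widehat{\mu}_M(m)|^{2}\big)|\widehat{\chi}_{rB}(m)|^{2}+N^{2}\sum_{0\neq k\in\mathbb{Z}^{d}}|\widehat{\mu}_M(Mk)|^{2}|\widehat{\chi}_{rB}(Mk)|^{2}.
\]
Both terms are bounded using the classical estimate $|\widehat{\chi}_{rB}(\xi)|\leq c\,r^{(d-1)/2}|\xi|^{-(d+1)/2}$, which holds uniformly for $0<r<1/2$ (this is the Bessel function asymptotics for the ball).

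For the second (aliasing) term I only use $|\widehat{\mu}_M|\leq1$. This gives the bound
\[
N^{2}\sum_{k\neq0}|Mk|^{-d-1}\leq cN^{2}M^{-d-1}=cN^{1-1/d}.
\]
Notice that this term does not depend on the support hypothesis at all.

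The first term is where the hypothesis enters, and I expect it to be the main step. Since $|\widehat{\mu}_M(m)|$ is invariant under translation of $\mu_M$, I may assume the support lies in a ball of radius $\varepsilon_M$ centered at $0$. For $M$ large this ball is contained in $Q$, so $\mu_M$ can be lifted to $\mathbb{R}^{d}$ without wrap-around issues. Then $|\widehat{\mu}_M(m)|\geq \int\cos(2\pi m\cdot t)\,d\mu_M(t)\geq 1-2\pi^{2}|m|^{2}\varepsilon_M^{2}$. Combined with the trivial bound, this yields
\[
1-|\widehat{\mu}_M(m)|^{2}\leq c\min\big(1,|m|^{2}M^{-2}\big),
\]
using $\varepsilon_M\leq cM^{-1}$. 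Thus the first term is at most
\[
cN\Big(M^{-2}\sum_{0<|m|\leq M}|m|^{1-d}+\sum_{|m|>M}|m|^{-d-1}\Big)\leq cN\big(M^{-2}\cdot M+M^{-1}\big)=cN^{1-1/d}.
\]
Here both sums are evaluated by comparing with integrals in polar coordinates.

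Adding the two estimates gives $D_{\mu_M}^{2}(N)\leq cN^{1-1/d}$. Finitely many small $M$ are absorbed into the constant, since $D^{2}_{\mu}(N)\leq cN^{2}$ trivially. The only delicate point is the lower bound on $|\widehat{\mu}_M(m)|$ via recentering, together with checking that the low-frequency range $|m|\lesssim M$ produces exactly the factor $M^{-1}$ and no logarithm. It does, because $\sum_{|m|\leq M}|m|^{1-d}\approx M$ in dimension $d$.
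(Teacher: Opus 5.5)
Your proof is correct. It agrees with the paper on the identity \eqref{discr}, which you rederive while the paper quotes it, and on the aliasing term, which you bound exactly as in \eqref{fatto}. The difference is in the main step, the estimate of $N\sum_{m\neq0}(1-|\widehat{\mu}_M(m)|^2)|\widehat{\chi}_{rB}(m)|^2$.

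The paper works in physical space. It writes this term as $N(|rB|-\|\chi_{rB}\ast\mu_M\|_{L^2}^2)$ and observes that $\chi_{rB}\ast\mu_M=\chi_{rB}$ off an annulus of width $O(\varepsilon_M)$ around $\partial(rB)$. This gives the bound $cN\varepsilon_M$ using only the volume of that annulus.

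You stay on the frequency side. You prove $1-|\widehat{\mu}_M(m)|^2\le c\min(1,|m|^2\varepsilon_M^2)$ via $1-\cos x\le x^2/2$, and sum against $|\widehat{\chi}_{rB}(m)|^2\le c|m|^{-d-1}$ split at $|m|\approx M$. This is the same computation the paper carries out later for Proposition \ref{prop:no_rescaling}, and it also yields $cN\varepsilon_M$.

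What each route buys:
\begin{itemize}
\item The paper's argument is shorter. For this term it uses no decay of $\widehat{\chi}_{rB}$ at all, so it works for any body whose boundary has $\varepsilon$-neighbourhoods of volume $O(\varepsilon)$.
\item Your argument needs the Bessel decay. In exchange, it identifies the quadratic vanishing of $1-|\widehat{\mu}_M|^2$ at the origin as the mechanism. This is the exact counterpart of the Taylor lower bound that drives Theorem \ref{thm:largeSupport}.
\item Your argument also shows that the support hypothesis can be relaxed to a second-moment condition $\int|t-x_0|^2\,d\mu_M(t)\le cM^{-2}$ for a suitable centre $x_0$.
\end{itemize}
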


In the next theorem, we show that the assumption $\varepsilon_M=\mathcal{O}\left(M^{-1}\right)$ cannot be improved. Let $\mu$ be a probability measure on $Q$. For every
$0<\varepsilon<1$ let $\mu^{\left(\varepsilon\right)}$ be the probability measure
defined by
\[
\int_{\mathbb{R}^{d}}f\left(x\right)d\mu^{\left(\varepsilon\right)}\left(x\right):=\int_{\mathbb{R}^{d}}f\left(\varepsilon x\right)d\mu\left(x\right).
\]
Therefore
\[
\widehat{\mu^{\left(\varepsilon\right)}}\left(\xi\right)=\int_{\mathbb{R}^{d}}e^{-2\pi ix\cdot\xi}d\mu^{\left(\varepsilon\right)}\left(x\right)=\int_{\mathbb{R}^{d}}e^{-2\pi ix\cdot\varepsilon\xi}d\mu\left(x\right)=\widehat{\mu}\left(\varepsilon\xi\right).
\]

\begin{theorem}\label{thm:largeSupport}
Let $\mu$ be a probability measure on $\mathbb{T}^d$ that does not coincide with a Dirac measure $\delta_{t_0}$. Then there exists a positive constant $c$ such that, for every
$0<\varepsilon<1$ and every $N$,
\[
D^{2}_{\mu^{\left(\varepsilon\right)}}\left(N\right)\geq cN\varepsilon.
\]
In particular, choosing  \  $\varepsilon_M=\Omega\left(M^{-1}\right)$ \ and $\mu_M=\mu^{\left(\varepsilon_M\right)}$  we have
\[
D^{2}_{\mu_M}\left(N\right)=\Omega\left(N^{1-1/d}\right).
\]
\end{theorem}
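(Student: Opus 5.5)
The plan is to get an exact Fourier expression for $D_{\mu}^{2}(N)$, discard a nonnegative term, and bound what remains from below by a lattice sum of $|\widehat{\chi}_{rB}(m)|^2$ restricted to a cone.

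\textbf{Step 1: the exact formula.} Write $v_j=t_j+u_j$, where the $u_j$ are independent with law $\mu$. By Parseval and the Fourier series of $D_{V_N}$ we get
\[
D_\mu^2(N)=\sum_{0\neq m\in\mathbb{Z}^d}|\widehat{\chi}_{rB}(m)|^2\,\mathbb{E}\Big|\sum_{j=1}^N e^{2\pi i m\cdot v_j}\Big|^2 .
\]
Expanding the square and using independence gives
\[
\mathbb{E}\Big|\sum_{j=1}^N e^{2\pi i m\cdot v_j}\Big|^2=N\big(1-|\widehat\mu(m)|^2\big)+|\widehat\mu(m)|^2\Big|\sum_j e^{2\pi i m\cdot t_j}\Big|^2 .
\]
The grid sum equals $N$ when $m\in M\mathbb{Z}^d$ and $0$ otherwise. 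Hence
\[
D_\mu^2(N)=N\sum_{m\neq0}\big(1-|\widehat\mu(m)|^2\big)|\widehat{\chi}_{rB}(m)|^2+N^2\sum_{0\neq k\in\mathbb{Z}^d}|\widehat\mu(Mk)|^2|\widehat{\chi}_{rB}(Mk)|^2 .
\]
For the lower bound I drop the second term. Since $\widehat{\mu^{(\varepsilon)}}(m)=\widehat\mu(\varepsilon m)$, it remains to show
\[
S(\varepsilon):=\sum_{m\neq0}\big(1-|\widehat\mu(\varepsilon m)|^2\big)|\widehat{\chi}_{rB}(m)|^2\ \geq c\varepsilon .
\]

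\textbf{Step 2: a quadratic lower bound for $1-|\widehat\mu|^2$ near the origin.} We have
\[
1-|\widehat\mu(\xi)|^2=\iint\big(1-\cos 2\pi\xi\cdot(x-y)\big)\,d\mu(x)\,d\mu(y).
\]
Because $\mu$ lives in $Q$, we have $|x-y|\le\sqrt d$. So for $|\xi|\le 1/(2\sqrt d)$ the phase is at most $\pi$ in absolute value, and $1-\cos\theta\ge c\theta^2$ there. This yields
\[
1-|\widehat\mu(\xi)|^2\ \ge\ c|\xi|^2\,\omega^{T}\Sigma\,\omega,\qquad \omega=\xi/|\xi|,
\]
where $\Sigma$ is the covariance matrix of $\mu$. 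Since $\mu$ is not a Dirac mass, $\Sigma\neq0$. Therefore there is an open cone $\Gamma$ of directions on which $\omega^{T}\Sigma\,\omega\ge c_0>0$. For $m\in\Gamma$ with $|m|\le c_1/\varepsilon$ we conclude that $1-|\widehat\mu(\varepsilon m)|^2\ge c\,\varepsilon^2|m|^2$.

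\textbf{Step 3: reduction to a lattice sum (the main obstacle).} By Step 2,
\[
S(\varepsilon)\ge c\,\varepsilon^2\sum_{m\in\Gamma,\ 1\le|m|\le c_1/\varepsilon}|m|^2\,|\widehat{\chi}_{rB}(m)|^2 ,
\]
where
\[
|\widehat{\chi}_{rB}(m)|^2=r^{d}|m|^{-d}J_{d/2}^2(2\pi r|m|)\approx r^{d-1}|m|^{-d-1}\cos^2\!\big(2\pi r|m|-\phi\big)+\text{lower order}.
\]
The zeros of the Bessel function prevent a pointwise bound, so I would argue on average.
\begin{itemize}
\item Group the lattice points of $\Gamma$ into shells $k\le|m|<k+1$ (or thicker shells of fixed width $w>1/(2r)$). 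Each contains $\approx k^{d-1}$ points of $\Gamma$, roughly equidistributed in $|m|$ for $k$ large.
\item Compare the shell sum with the integral $\int_{\Gamma\cap\{k\le|\xi|<k+w\}}|\xi|^{2}|\widehat{\chi}_{rB}(\xi)|^2d\xi$. The gradient of the integrand is $O(k^{1-d})$, so the error is $O(k^{d-1}\cdot k^{1-d})=O(1)$, which is the same order as the main term.
\item I therefore expect to need a sharper estimate: either count lattice points in thin shells with error $o(k^{d-1})$ (valid for $d\ge2$), or choose the shell width large enough that the averaged $\cos^2\ge 1/4$ dominates. I would try the latter first.
\end{itemize}
Either way, each shell contributes $\gtrsim r^{d-1}$, and summing over $k\le c_1/\varepsilon$ gives $\gtrsim 1/\varepsilon$. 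Hence $S(\varepsilon)\ge c\varepsilon$ for $\varepsilon$ small. For $\varepsilon$ bounded away from $0$ it suffices that $S(\varepsilon)$ is bounded below by a positive constant. This follows since a single $m$ with $\widehat{\chi}_{rB}(m)\neq0$, and a small-$|\varepsilon m|$ argument or continuity in $\varepsilon$, give a positive term.

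\textbf{Step 4: the consequence.} If $\varepsilon_M=\Omega(M^{-1})$, then $\varepsilon_M M$ is unbounded. Hence $D^2_{\mu_M}(N)/N^{1-1/d}\ge cN\varepsilon_M/N^{1-1/d}=c\,M\varepsilon_M$ is unbounded, which is the statement.
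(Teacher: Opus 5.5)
Your Steps 1, 2 and 4 are fine. Step 2 is in fact cleaner than the paper's Taylor expansion: there is no recentering, no $\mathcal{O}(|\xi|^3)$ remainder, and the range $|\xi|\le 1/(2\sqrt d)$ is explicit.

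\textbf{The gap is Step 3, which carries the whole content of the theorem.} ``Each shell contributes $\gtrsim r^{d-1}$'' is asserted, not proved, and the option you would try first cannot prove it. Take a shell of width $W$ around radius $k$. The integral of $|\xi|^{1-d}\cos^2(2\pi r|\xi|-\frac{d+1}{4}\pi)$ over it is of order $W$. The sum-versus-integral error is $\mathcal{O}(k^{1-d})$ per lattice point, because the gradient of the $\cos^2$ factor does not decay, and there are about $Wk^{d-1}$ points. So the error is also of order $W$, and enlarging the shell never lets the main term dominate. What is missing is arithmetic: you must know that lattice points in annuli of \emph{fixed} width, comparable to the period $1/(2r)$, do not cluster near the spheres where the cosine vanishes. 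Your first option is the right one, and it is what the paper does. On the annuli $A_{K,1/4}$, of width $1/(4r)$, one has $\cos^2\ge 1/2$. The classical bound $K^d|B|-\operatorname{card}(KB\cap\mathbb{Z}^d)=o(K^{d-1})$, applied to the two bounding balls, gives $\operatorname{card}(A_{K,1/4}\cap\mathbb{Z}^d)\approx K^{d-1}$. Summing $|m|^{1-d}$ over these annuli up to $K\approx c/\varepsilon$ gives $\gtrsim 1/\varepsilon$. The remainder in the Bessel asymptotics contributes only $\mathcal{O}(\varepsilon^2\log(1/\varepsilon))$ to $S(\varepsilon)$.

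\textbf{Two further points.} First, you restricted the sum to the cone $\Gamma$, and the ball-counting bound says nothing directly about annulus $\cap\,\Gamma$. The paper also passes from $m\in\Gamma$ to all $m$, via the constant $c_4$, without further justification. Your Step 2 lets you avoid the cone. It gives $1-|\widehat\mu(\varepsilon m)|^2\ge c\,\varepsilon^2\, m^{T}\Sigma m$ for \emph{all} $m$ with $|m|\le c_1/\varepsilon$. Since $|\widehat\chi_{rB}(m)|$ and the summation range depend only on $|m|$, you can average over the signed permutations of coordinates, which preserve $\mathbb{Z}^d$. This replaces $m^{T}\Sigma m$ by $\frac1d(\operatorname{tr}\Sigma)|m|^2$, with $\operatorname{tr}\Sigma>0$, so full annuli suffice.

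Alternatively, you can bypass Step 3 entirely by reading the first term of the exact formula in physical space:
$S(\varepsilon)=|rB|-\|\chi_{rB}\ast\mu^{(\varepsilon)}\|_{L^2(\mathbb{T}^d)}^2=\frac12\iint|(rB+\varepsilon(x-y))\triangle rB|\,d\mu(x)\,d\mu(y)$.
Since $|(rB+h)\triangle rB|\ge c\,r^{d-1}|h|$ for small $|h|$, this gives $S(\varepsilon)\ge c\,\varepsilon\iint|x-y|\,d\mu\,d\mu$ for small $\varepsilon$. This is the lower-bound counterpart of the argument in the proof of Theorem~\ref{dip eps}, and it involves no Bessel zeros or lattice points.

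Second, for $\varepsilon$ close to $1$ there is no $m\neq0$ with $|\varepsilon m|\le c_1$, so only your continuity route is available there. That route needs $S(\varepsilon)>0$, which requires an $m$ with $\widehat\chi_{rB}(m)\neq0$ \emph{and} $|\widehat\mu(\varepsilon m)|<1$ simultaneously, as in the proof of Theorem~\ref{indip eps}. A single $m$ with $\widehat\chi_{rB}(m)\neq0$ is not enough.
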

The assumptions of Theorem \ref{thm:largeSupport} encompass different types of measures. We give three examples, where we assume that, as $M\rightarrow+\infty$, we have $\varepsilon_M \rightarrow0$, and $\varepsilon_M M$ is  unbounded, that is, $\varepsilon_M=\Omega\left(  M^{-1}\right)$.
\begin{enumerate}
\item The normalized characteristic function of a ball of radius $\varepsilon_M$. We recall that $B=\left\{  t\in\mathbb{R}^{d}:\left\vert t\right\vert
\leq1\right\}  $, and we choose the probability measure%
\[
\mu_M\left(  t\right)  =\frac{\Gamma\left(  1+d/2\right)  }{\varepsilon_M^{d}%
\pi^{d/2}}\chi_{\varepsilon_M B}\left(  t\right)  dt. 
\]
\item The normalized uniform  measure on the sphere of radius $\varepsilon_M$ 
$\left\{  t\in\mathbb{R}^{d}:\left\vert t\right\vert =\varepsilon_M\right\}$. 
\item A half sum of two Dirac measures at distance $\varepsilon_M$ from each other,
\[
\mu_M=\frac{1}{2}\left(  \delta_{0}+\delta_{\varepsilon_M v}\right),  \]
where $v$ is a fixed unit vector.
\end{enumerate}

We point out that Theorem \ref{dip eps} holds for any sequence of measures $\mu_M$ supported in a ball of radius $\leq c/M$. In particular, the \textquotedblleft shapes\textquotedblright\  of the measures $\mu_M$ may change drastically as their supports shrink. In contrast, Theorem \ref{thm:largeSupport} considers only the particular case of a sequence of measures obtained by rescaling a given measure $\mu$. The following proposition shows that Theorem \ref{thm:largeSupport} cannot be extended to every sequence of measures.

\begin{proposition}\label{prop:no_rescaling}
Let $\varepsilon_M$ satisfy $M^{-1}<\varepsilon_M<1$ (this assumption covers the case $\varepsilon_M=\Omega(M^{-1})$). Then there exists a sequence of measures $\mu_M$ on $\mathbb{T}^d$, each supported in a ball of radius $\varepsilon_M$, such that 
\[
D^2_{\mu_M}\left(N\right)\leq cN^{1-1/d}.
\]
\end{proposition}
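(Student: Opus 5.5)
Our plan is to build $\mu_M$ so that its Fourier transform behaves, on the frequencies that matter, like that of a measure of diameter $\mathcal{O}(M^{-1})$. We take $\mu_M$ to be the normalized sum of $K\approx (\varepsilon_M M)^d$ Dirac masses placed at points of the grid $\frac{1}{M}\mathbb{Z}^d$ inside the ball of radius $\varepsilon_M$, with each mass then smeared over a cube of side $1/M$:
\[
\mu_M=\frac{1}{K}\sum_{k=1}^{K}\nu_M(\,\cdot-s_k),\qquad s_k\in\tfrac1M\mathbb{Z}^d,\ |s_k|\le\varepsilon_M-\sqrt d/M .
\]
The key observation is that translating a grid point $t_j$ by a grid vector $s_k$ only permutes $T_N$ modulo $\mathbb{Z}^d$. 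We therefore expect the structure of $D_{\mu_M}$ to reduce to that of the jittered measure $\nu_M$ of Example \ref{ex:jitt}.

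\textbf{Expanding $D^2_\mu$.} We first expand $D^2_\mu(N)$ with Parseval and the Fourier series of $D_{V_N}$, using independence of the $v_j$. This gives the standard formula
\[
D^2_\mu(N)=\sum_{m\neq0}|\widehat{\chi}_{rB}(m)|^2\Big(N\big(1-|\widehat\mu(m)|^2\big)+|\widehat\mu(m)|^2\Big|\sum_{j=1}^N e^{-2\pi i m\cdot t_j}\Big|^2\Big),
\]
where the last sum equals $N^2$ if $m\in M\mathbb{Z}^d$ and $0$ otherwise. We will derive this directly, since it is presumably the formula underlying Theorems \ref{indip eps} and \ref{dip eps}.

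\textbf{Substituting our $\mu_M$.} The transform factors as $\widehat{\mu_M}(m)=\widehat{\nu_M}(m)P(m)$, where
\[
P(m)=\frac1K\sum_k e^{-2\pi i m\cdot s_k}.
\]
For $m\in M\mathbb{Z}^d$ we have $P(m)=1$, since $m\cdot s_k\in\mathbb{Z}$. So the aliasing terms are exactly those of $\nu_M$. Since $|P|\le1$, we get $1-|\widehat{\mu_M}|^2\le 1-|\widehat{\nu_M}|^2+|\widehat{\nu_M}|^2(1-|P|^2)$. The first piece reproduces $D^2_{\nu_M}(N)\le cN^{1-1/d}$, from Example \ref{ex:jitt} or Theorem \ref{dip eps}.

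\textbf{Main obstacle.} The remaining term is
\[
N\sum_{m\notin M\mathbb{Z}^d}|\widehat\chi_{rB}(m)|^2|\widehat{\nu_M}(m)|^2(1-|P(m)|^2).
\]
Bounding $1-|P|^2$ by $1$ makes it of order $N\sum_{0<|m|\lesssim M}|\widehat\chi_{rB}(m)|^2$, which is only $\mathcal{O}(N)$. That is too large, so this naive construction fails, and the masses must be arranged so that $P(m)$ stays close to $1$ at low frequencies.

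\textbf{Alternative: concentrate the mass.} The cleanest fix is to put almost all the mass near the origin. Take
\[
\mu_M=(1-\eta)\,\nu_M+\eta\,\delta_{x_M},\qquad |x_M|=\varepsilon_M/2,\quad \eta=M^{-1}.
\]
Then $1-|\widehat{\mu_M}|^2\le (1-|\widehat{\nu_M}|^2)+c\eta$. The extra contribution is at most
\[
cN\eta\sum_{m\ne0}|\widehat\chi_{rB}(m)|^2\le cN\eta\,|rB|\le cN^{1-1/d}.
\]
The aliasing terms change by factors bounded by $1$, since $|\widehat{\mu_M}|\le1$. Hence they are controlled by $N^2\sum_{0\neq m\in M\mathbb{Z}^d}|\widehat\chi_{rB}(m)|^2\le cN^{1-1/d}$, using the decay
\[
|\widehat\chi_{rB}(\xi)|\le c\,r^{(d-1)/2}|\xi|^{-(d+1)/2},
\]
which is summable over $M\mathbb{Z}^d$ and gives $N^2M^{-d-1}=N^{1-1/d}$ up to a constant.

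Both pieces are of order $N^{1-1/d}$, which proves the proposition. The support of $\mu_M$ lies in a ball of radius $\varepsilon_M$ whenever $M^{-1}\lesssim\varepsilon_M$; the support of $\nu_M$ has radius $\sqrt d/(2M)$, which is handled by adjusting constants. This second construction is the one I would actually write up.
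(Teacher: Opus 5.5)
Your final construction $\mu_M=(1-\eta)\nu_M+\eta\,\delta_{x_M}$ with $\eta=M^{-1}$ is correct. It reaches the bound by a different, cruder mechanism than the paper.

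The paper takes $\mu_M=(1-\alpha_M)\delta_0+\alpha_M\delta_{t_M}$ with $|t_M|=\varepsilon_M$ and the larger weight $\alpha_M=(M\varepsilon_M)^{-1}$. It computes exactly $1-|\widehat{\mu}(m)|^2=4\alpha(1-\alpha)\sin^2(\pi t_0\cdot m)$ and splits the sum at $|m|=\varepsilon^{-1}$. At low frequencies it uses $\sin^2(\pi t_0\cdot m)\le\pi^2\varepsilon^2|m|^2$, and at high frequencies the decay $|m|^{-d-1}$. Hence the far mass costs $\mathcal{O}(N\alpha\varepsilon)=\mathcal{O}(N/M)$.

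You never use the distance $|x_M|$. Your bound $\bigl||\widehat{\mu_M}|^2-|\widehat{\nu_M}|^2\bigr|\le 4\eta$ deserves its one-line proof: it follows from $|a|,|b|\le 1$ and $|\widehat{\mu_M}-\widehat{\nu_M}|\le 2\eta$. Combined with Parseval, $\sum_{m}|\widehat{\chi}_{rB}(m)|^2=|rB|$, it gives an extra cost $\mathcal{O}(N\eta)=\mathcal{O}(N/M)$ directly.

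Each route buys something different:
\begin{itemize}
\item Your argument is more elementary. It shows that mixing in an arbitrary probability measure, placed anywhere on $\mathbb{T}^d$, with weight $M^{-1}$ does no harm.
\item The paper's argument is quantitatively sharper. It allows a much larger mass $(M\varepsilon_M)^{-1}$ at distance $\varepsilon_M$, because a point mass at distance $\varepsilon$ perturbs $\widehat{\mu}(m)$ only by $\mathcal{O}(\varepsilon|m|)$ when $|m|\le\varepsilon^{-1}$.
\end{itemize}

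Two small repairs are needed.
\begin{itemize}
\item For $d\ge5$ and $M^{-1}<\varepsilon_M<\sqrt{d}/(2M)$, the cube $M^{-1}Q$ fits in no ball of radius $\varepsilon_M$. So ``adjusting constants'' must mean shrinking the smearing cube, and Theorem \ref{dip eps} still applies to the smaller cube. Simpler still, replace $\nu_M$ by $\delta_0$. This kills the first piece entirely and turns your measure into the paper's two-point measure with weight $M^{-1}$ instead of $(M\varepsilon_M)^{-1}$.
\item Drop the first, discarded construction from the write-up.
\end{itemize}
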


The proofs of the above theorems depend on the following identities
(see \cite{CT09} and \cite[p.215]{BGT14}), which hold true for every choice of
the probability measure $\mu$ (see \eqref{T_N} and \eqref{asterisco} for the definition of $D_{T_{N}}$):
\begin{align}
 D_{\mu}^{2}\left(  N\right)  =&N\left(  \left\vert rB\right\vert
-\left\Vert \chi_{rB}\ast\mu\right\Vert _{L^{2}\left(  \mathbb{T}^{d}\right)
}^{2}\right)  +\left\Vert D_{T_{N}}\ast\mu\right\Vert _{L^{2}\left(
\mathbb{T}^{d}\right)  }^{2}\label{discr}\\
 =&N\sum_{0\neq m\in\mathbb{Z}^{d}}\left(  1-\left\vert \widehat{\mu}\left(
m\right)  \right\vert ^{2}\right)  \left\vert \widehat{\chi}_{rB}\left(
m\right)  \right\vert ^{2}\nonumber\\
&+N^{2}\sum_{0\neq h\in\mathbb{Z}^{d}}\left\vert
\widehat{\chi}_{rB}\left(  Mh\right)  \right\vert ^{2}\left\vert \widehat{\mu
}\left(  Mh\right)  \right\vert ^{2} \ .\nonumber
\end{align}

\medskip

We recall that $\widehat{\chi}_{rB}$ (see, e.g., \cite{Watson} or \cite{Guido})
can be written in terms of the Bessel function of the first kind $J_{d/2}$:%
\begin{align}
\widehat{\chi}_{rB}\left(  m\right)   &  =r^{d}\widehat{\chi}_{B}\left(
rm\right)  =r^{d}\left\vert rm\right\vert ^{-d/2}J_{d/2}\left(  2\pi
r\left\vert m\right\vert \right). \label{bes}
\end{align}
We also recall that, for $\left\vert m\right\vert \geq 1$, we have
\begin{equation}
\widehat{\chi}_{rB}\left(  m\right)  =\frac{r^{\left(  d-1\right)  /2}}{\pi
\left\vert m\right\vert ^{\left(  d+1\right)  /2}}\cos\left(  2\pi
r\left\vert m\right\vert -\frac{d+1}{4}\pi\right)  +\mathcal{O}\left(r
\left\vert m\right\vert ^{-(d+3)/2}\right).  \label{Bes}%
\end{equation}
Usually \eqref{Bes} is stated for large values of $|m|$, however, a suitable choice of the implicit constant in $\mathcal{O}$ makes it hold for every $|m|\geq1$.

\begin{proof}
[Proof of Theorem \ref{indip eps}]For every $m\in\mathbb{Z}^{d}$,
\eqref{discr} and \eqref{bes} yield
\[
D_{\mu}^{2}\left(  N\right)  \geq cNr^{d}\sum_{0\neq m\in\mathbb{Z}^{d}%
}\left(  1-\left\vert \widehat{\mu}\left(  m\right)  \right\vert ^{2}\right)
\left\vert m\right\vert ^{-d}\left\vert J_{d/2}\left(  2\pi r\left\vert
m\right\vert \right)  \right\vert ^{2}\ .
\]
To prove that $D_{\mu}^{2}\left(  N\right)  \geq cN$ whenever $\mu$ is
different from a Dirac measure $\delta_{t_{0}}$ (for any $t_{0}\in\mathbb{T}%
^{d}$), it suffices to prove the existence of $\widetilde{m}\in\mathbb{Z}^{d}$
such that
\begin{equation}
J_{d/2}\left(  2\pi r\left\vert \widetilde{m}\right\vert \right)
\neq0\text{\ \ \ \ \ \ and\ \ \ \ \ \ }\left\vert \widehat{\mu}\left(
\widetilde{m}\right)  \right\vert <1\ . \label{both}%
\end{equation}
First we look at the Bessel function and we write the positive solutions of
the equation $J_{d/2}\left(  x\right)  =0$ as%
\[
j_{d/2,1}<j_{d/2,2}<j_{d/2,3}<\ \ldots\ <j_{d/2,n}<\ \ldots
\]
Equation \eqref{Bes} shows that, going to infinity, the zeros of $J_{d/2}\left(  x\right)
$ must be \textquotedblleft close\textquotedblright\ to the zeros of \ 
$\cos\left(x-\left(d+1\right)\pi/4\right)  $. More precisely, McMahon's
formula (see e.g. \cite[9.5.12]{ABST} or \cite[p.506]{Watson}) shows that, for
large positive integers $n$, we have

\begin{align}
j_{d/2,n} &= \left( n+\frac{d-1}{4}\right) \pi - \frac{d^{2}-1}{8\pi\left( n+\frac{d-1}{4}\right) } - \frac{\left( d^{2}-1\right) \left( 7d^{2}-31\right) }{384\pi^{3}\left( n+\frac{d-1}{4}\right)^{3}} \label{McM} \\
& \quad + \mathcal{O}\left( n^{-5}\right) \ . \nonumber
\end{align}
It is clear that the sequence $j_{d/2,n}$ is not an arithmetic progression.
More specifically, we will prove that it cannot even contain an arithmetic
progression. Indeed, assuming that the subsequence $j_{d/2,n_{k}}$ is an arithmetic progression, then \eqref{McM} yields (writing $\eta:=\left(
d-1\right)  \pi/4$)%
\begin{align}
0    = & \ j_{d/2,n_{k+1}}-2j_{d/2,n_{k}}+j_{d/2,n_{k-1}}\label{long}\\
  =& \ \pi n_{k+1}+\eta-\frac{d^{2}-1}{8\pi\left(  n_{k+1}+\eta\right)  }%
-\frac{\left(  d^{2}-1\right)  \left(  7d^{2}-31\right)  }{384\pi^{3}\left(
n_{k+1}+\eta\right)  ^{3}}\nonumber\\
&  -2\pi n_{k}-2\eta+2\frac{d^{2}-1}{8\pi\left(  n_{k}+\eta\right)  }%
+2\frac{\left(  d^{2}-1\right)  \left(  7d^{2}-31\right)  }{384\pi^{3}\left(
n_{k}+\eta\right)  ^{3}}\nonumber\\
&  +\pi n_{k-1}+\eta-\frac{d^{2}-1}{8\pi\left(  n_{k-1}+\eta\right)  }%
-\frac{\left(  d^{2}-1\right)  \left(  7d^{2}-31\right)  }{384\pi^{3}\left(
n_{k-1}+\eta\right)  ^{3}}+\mathcal{O}\left(  n_{k-1}^{-5}\right) \nonumber\\
= & \ \pi\left(  n_{k+1}-2n_{k}+n_{k-1}\right) \nonumber\\
&  +\frac{d^{2}-1}{8\pi}\left(  \frac{-1}{n_{k+1}+\eta}+\frac{2}{n_{k}+\eta
}-\frac{1}{n_{k-1}+\eta}\right) \nonumber\\
&  +\frac{\left(  d^{2}-1\right)  \left(  7d^{2}-31\right)  }{384\pi^{3}%
}\left(  \frac{-1}{\left(  n_{k+1}+\eta\right)  ^{3}}+\frac{2}{\left(
n_{k}+\eta\right)  ^{3}}-\frac{1}{\left(  n_{k-1}+\eta\right)  ^{3}}\right)
\nonumber\\
&  +\mathcal{O}\left(  n_{k-1}^{-5}\right)  \ .\nonumber
\end{align}
We consider two cases. First assume that $n_{k}$ is an arithmetic progression,
then the same is true for $h_{k}:=n_{k}+\eta$ and we write $h_k=\alpha k+\beta$. Then
\[
\pi\left(  n_{k+1}-2n_{k}+n_{k-1}\right)=0.
\]
Also
\begin{align*}
\frac{-1}{h_{k+1}}+\frac{2}{h_{k}}-\frac{1}{h_{k-1}}= &	\ \frac{-1}{\alpha k+\beta+\alpha}+\frac{2}{\alpha k+\beta}-\frac{1}{\alpha k+\beta+\alpha}\\
= &\ 	-\frac{2}{\alpha k^{3}}+\mathcal O\left(\frac{1}{k^{4}}\right).
\end{align*}
A direct, though somewhat lengthy, computation gives
\[
-\frac{1}{h^{3}_{k+1}}+\frac{2}{h^{3}_{k}}-\frac{1}{h^{3}_{k-1}}=\mathcal O\left(\frac{1}{k^4}\right).
\]
Then \eqref{long}
becomes%
\[
0    = \frac{c}{k^3}+\mathcal{O}\left(\frac{1}{k^4}\right) 
\]
which is impossible. Now assume that $n_{k}$ is not an arithmetic progression.
Since the numbers $n_{k}$ are integers, then, for infinitely many of them, we
have $\left\vert n_{k+1}-2n_{k}+n_{k-1}\right\vert \geq1$. Therefore
, \eqref{long} yields a contradiction. So far we have proved that the set of zeros of the equation $J_{d/2}\left(  x\right)  =0$ does not contain an
arithmetic progression. We have to go back to the equation
\begin{equation}
J_{d/2}\left(  2\pi r\left\vert m\right\vert \right)  =0\ , \label{mod}%
\end{equation}
where $m\in\mathbb{Z}^{d}$. We claim that no $d$-dimensional arithmetic
progression of the particular form
\begin{equation}
m_k=\left(  k+s\right)  m_{0}\ , \label{quasi}%
\end{equation}
with $m_{0}\in\mathbb{Z}^{d}$, $s\in\mathbb{N}$, and $k=1,2,\ldots$ can satisfy \eqref{mod}. Indeed, assume that%
\[
0=J_{d/2}\left(  2\pi r\left\vert m_k\right\vert \right)  =J_{d/2}\left(  2\pi
r\left(  k+s\right)  \left\vert m_{0}\right\vert \right)  \ .
\]
Then the arithmetic progression
\[
x_{k}=2\pi rk\left\vert m_{0}\right\vert +2\pi rs\left\vert m_{0}\right\vert
\]
satisfies $J_{d/2}\left(x_{k}\right)=0$, which contradicts the previous remark on the zeros of $J_{d/2}$. 

We now turn to the second condition in \eqref{both}. A general
result proved by E. Hewitt and H. Rubin on locally compact abelian groups
(\cite[Theorems 1.1]{HR}) implies that, for every probability measure $\mu$ on
$\mathbb{T}^{d}$, the two sets
\begin{equation}
\begin{aligned}
H_{0}\left(\mu\right):=\left\{  m\in\mathbb{Z}^{d}:\widehat{\mu}\left(
m\right)=1\right\}, \\
H\left(  \mu\right)  :=\left\{
m\in\mathbb{Z}^{d}:\left\vert \widehat{\mu}\left(  m\right)  \right\vert
=1\right\} 
\end{aligned}
\label{acca}%
\end{equation}
are subgroups of $\mathbb{Z}^{d}$, hence sublattices. In the case of $\mathbb{T}^d$, this result is also a consequence of the argument at the end of the proof of Lemma 5 in \cite{BChGT}. 

We claim that if $\mu$
is not the Dirac measure at a point $t_{0}\in\mathbb{T}^{d}$, then $H\left(\mu\right)$ is a proper subset of
$\mathbb{Z}^{d}$. Indeed, it is well known that $H_{0}\left(\mu\right) 
=\mathbb{Z}^{d}$ if and only if $\mu$ is the Dirac measure $\delta_{0}$. Then \cite[Theorems 1.1]{HR} or a modification of a known argument (see \cite[Lemma 5]{BChGT}) yield that $H\left(\mu\right)
=\mathbb{Z}^{d}$ if and only if $\mu$ is a Dirac measure $\delta_{t_{0}}$ at a
point $t_{0}\in\mathbb{T}^{d}$. Indeed, assume that $\left\vert \widehat{\mu}\left(
m\right)  \right\vert =1$ for every $m\in\mathbb{Z}^{d}$. Then there exists a real number $\alpha$ (depending on $m$) such that 
$\widehat{\mu}\left(  m\right)  =e^{-2\pi i\alpha}$. Let$\ e_{1},e_{2},\ldots,e_{d}$ be the standard
basis of $\mathbb{R}^{d}$. Then for every $j=1,2,\ldots,d$ there exists
$x_{j}\in\mathbb{R}$ such that, writing $t=\left(  t_{1},t_{2},\ldots
,t_{d}\right) $, we have
\[
\widehat{\mu}\left(  e_{j}\right)  =\int_{\mathbb{T}^{d}}e^{-2\pi ie_{j}\cdot
t}\ d\mu\left(  t\right)  =\int_{\mathbb{T}^{d}}e^{-2\pi it_{j}}\ d\mu\left(
t\right)  =e^{-2\pi ix_{j}}\ .
\]
Hence,
\[
\int_{\mathbb{T}^{d}}e^{-2\pi i\left(t_{j}-x_{j}\right)}\ d\mu\left(t\right) =  1 \ .
\]
Since $\mu$ is a probability measure and $|e^{-2\pi i\left(  t_{j}%
-x_{j}\right)}|=1$, we have that $e^{-2\pi i\left(  t_{j}%
-x_{j}\right)}=1$ almost everywhere with respect to $\mu$. Then
$\mu=\delta_{x}$, where $x=\left(  x_{1},x_{2},\ldots,x_{d}\right)$. Hence,
if $\mu$ is not a Dirac measure, then there exists $\widetilde{m}\in
\mathbb{Z}^{d}$ such that $\left\vert \widehat{\mu}\left(  \widetilde
{m}\right)  \right\vert <1$. That is, $\widetilde{m}\in\mathbb{Z}%
^{d}\backslash H\left( \mu\right)  $. We claim that $\mathbb{Z}%
^{d}\backslash H\left( \mu\right)  $ contains an arithmetic progression of
the particular form \eqref{quasi}. Indeed, at least one of the unit vectors $e_{1}%
,e_{2},\ldots,e_{d}$ (say $e_{1}$) does not belong to the proper  subgroup $H\left(
\mu\right)  $, otherwise $H\left( \mu\right)  =\mathbb{Z}^{d}$. If
all the elements of the form $ke_{1}$ ($k\in\mathbb{N}$) do not belong to
$H\left( \mu\right)  $, then we are done. Otherwise, let $k_{0}$ be the
first positive integer such that $\left\vert \widehat{\mu}\left(  k_{0}%
e_{1}\right)  \right\vert =1$, that is, $k_{0}e_{1}\in H\left(  \mu\right)  $.
Then $\left(  k_{0}+1\right)  e_{1}\notin H\left( \mu\right)  $, otherwise
the difference
\[
\left(  k_{0}+1\right)  e_{1}-k_{0}e_{1}=e_{1}\in H\left( \mu\right)  \ .
\]
In the same way, we see that
\[
K:=\left\{  ke_{1}\in\mathbb{Z}^{d}:k\text{ is not a positive integer multiple
of }k_{0}\right\}
\]
satisfies%
\[
K\cap H\left(\mu\right)  =\varnothing\ .
\]
This yields that $K$ contains an arithmetic progression of the form
\eqref{quasi}. For example, in the simplest case $k_{0}=2$, the set 
\[
K=\left\{  ke_{1}\in\mathbb{Z}^{d}:k\text{ odd}\right\}
\]
is an arithmetic progression of the form \eqref{quasi}.

Now we have proved that there exists $\widetilde{m}$ such that \eqref{both}
holds. Hence%
\[
D_{\mu}^{2}\left(  N\right)  \geq N\left(  1-\left\vert \widehat{\mu}\left(
\widetilde{m}\right)  \right\vert ^{2}\right)  \left\vert \widehat{\chi}%
_{rB}\left(  \widetilde{m}\right)  \right\vert ^{2}\geq cN\ .
\]

To prove the reverse inequality $D_{\mu}^{2}\left(  N\right)  \leq c_{1}N$,
we first recall that $\left\vert \widehat{\mu}\left(  m\right)  \right\vert
\leq1$ and that \eqref{Bes} yields $\left\vert \widehat{\chi}_{rB}\left(
m\right)  \right\vert ^2\leq c\left\vert m\right\vert ^{-d-1}$. Then \eqref{discr} gives 
\begin{align}
D_{\mu}^{2}\left(  N\right)   &  \leq N\sum_{0\neq m\in\mathbb{Z}^{d}%
}\left\vert \widehat{\chi}_{rB}\left(  m\right)  \right\vert ^{2}+N^{2}%
\sum_{0\neq h\in\mathbb{Z}^{d}}\left\vert \widehat{\chi}_{rB}\left(
Mh\right)  \right\vert ^{2}\left\vert \widehat{\mu}\left(  Mh\right)
\right\vert ^{2}\nonumber\\
&  \leq cN\sum_{0\neq m\in\mathbb{Z}^{d}}\left\vert m\right\vert
^{-d-1}+cN^{2}\sum_{0\neq h\in\mathbb{Z}^{d}}\left(  M\left\vert h\right\vert
\right)  ^{-d-1}\nonumber\\
&  =cN\sum_{0\neq m\in\mathbb{Z}^{d}}\left\vert m\right\vert ^{-d-1}%
+cN^{1-1/d}\sum_{0\neq h\in\mathbb{Z}^{d}}\left\vert h\right\vert
^{-d-1}\label{fatto}\\
&  \leq cN\int_{\left\{  t\in\mathbb{R}^{d}:\left\vert t\right\vert
\geq1\right\}  }\left\vert t\right\vert ^{-d-1}\ dt\leq cN\ .\nonumber
\end{align}

\end{proof}

\medskip

\begin{proof}
[Proof of Theorem \ref{dip eps}]
\eqref{discr} shows that $D_{\mu_M}^{2}\left(N\right) $ depends only on the absolute value of $\widehat{\mu}_M$. Hence we can assume that
$\mu_M$ is a probability measure on $Q$, with support in a ball of radius $\varepsilon_M$ centered at
the origin. We have seen in \eqref{fatto} that%
\[
N^{2}\sum_{0\neq h\in\mathbb{Z}^{d}}\left\vert \widehat{\chi}_{rB}\left(
Mh\right)  \right\vert ^{2}\left\vert \widehat{\mu}_M\left(  Mh\right)
\right\vert ^{2}\leq cN^{1-1/d}\ .
\]
Hence it suffices to bound the term%
\[
N\left(  \left\vert rB\right\vert -\left\Vert \chi_{rB}\ast\mu_M\right\Vert
_{L^{2}\left(  \mathbb{T}^{d}\right)  }^{2}\right)  =N\sum_{0\neq
m\in\mathbb{Z}^{d}}\left(  1-\left\vert \widehat{\mu}_M\left(  m\right)
\right\vert ^{2}\right)  \left\vert \widehat{\chi}_{rB}\left(  m\right)
\right\vert ^{2}\ .
\]
Following a known argument (see e.g. \cite[p. 222]{TRA14}) we observe that
\[
\left(  \chi_{rB}\ast\mu_M\right)  \left(  t\right)  =\chi_{rB}\left(  t\right)
\]
for every $t\in\mathbb{T}^{d}$ outside the $d$-dimensional annulus determined
by the two balls
\[
\left\{  t\in\mathbb{R}^{d}:\left\vert t\right\vert \leq r\pm\varepsilon_M
\sqrt{d}/2\right\}  \ .
\]
This annulus has volume $\leq c\varepsilon_M$. Since \ $\left\Vert \chi_{rB}\ast\mu_M\right\Vert
_{L^{2}\left(  \mathbb{T}^{d}\right)  }^{2} \leq \left\vert rB\right\vert$ \ and $\varepsilon_M\leq cN^{-1/d}$, then 
\[
N\left(  \left\vert rB\right\vert -\left\Vert \chi_{rB}\ast\mu_M\right\Vert
_{L^{2}\left(  \mathbb{T}^{d}\right)  }^{2}\right)  \leq cN\varepsilon_M\leq
cN^{1-1/d}\ .
\]

\end{proof}

\begin{proof}
[Proof of Theorem \ref{thm:largeSupport}]
Let $x_{0}=\int_{\mathbb{R}^{d}}xd\mu\left(x\right)$ be the center
of mass of $\mu$ and let $\mu_{x_{0}}$ the measure translated
by $x_{0}$, that is 
\[
\int f\left(x\right)d\mu_{x_{0}}\left(x\right)=\int f\left(x-x_{0}\right)d\mu\left(x\right).
\]
Since $D_{\mu}^{2}\left(  N\right) $ depends only on the absolute value of $\widehat{\mu}$, then  $D^{2}_{\mu}\left(N\right)=D^{2}_{\mu_{x_{0}}}\left(N\right)$. Then 
we can assume $x_{0}=0$. Since $\mu$ has compact support, then $\widehat{\mu}\left(\xi\right)$
is smooth and we have the following Taylor expansion (see \cite[Lemma 5]{BChGT}):
\begin{equation}
1-\left|\widehat{\mu}\left(\xi\right)\right|^{2}=\left(\nabla\widehat{\mu}(0)\cdot\xi\right)^{2}-H_{\widehat{\mu}}(0)\xi\cdot\xi+\mathcal{O}\left(|\xi|^{3}\right).\label{eq:Taylor}
\end{equation}
Since $\int_{\mathbb{R}^{d}}xd\mu\left(x\right)=x_{0}=0$, then 
\begin{align*}
\nabla\widehat{\mu}\left(0\right)\cdot\xi= & \left.\nabla\left(\int_{\mathbb{R}^{d}}e^{-2\pi ix\cdot\eta}d\mu\left(x\right)\right)\right|_{\eta=0}\cdot\xi\\
 =&\left(\int_{\mathbb{R}^{d}}\left(-2\pi ix\right)d\mu\left(x\right)\right)\cdot\xi=0.
\end{align*}
For the term with the Hessian matrix we have
\[
-H_{\widehat{\mu}}(0)\xi\cdot\xi=4\pi^{2}\int_{\mathbb{R}^{d}}\left(x\cdot\xi\right)^{2}d\mu\left(x\right).
\]
Assume that $H_{\widehat{\mu}}(0)\xi\cdot\xi=0$ for every $\xi$. 
Since $\mu$ is a probability measure, then, for every $\xi$,
\[
\mu\left(\left\{ x:x\cdot\xi\neq0\right\} \right)=0
\]
and therefore 
\[
\mu\left(\left\{ x\neq0\right\} \right)=0 \ . 
\]
Hence $\mu=\delta_{0}$, against our assumptions. It follows
that there exists $\xi_{0}$, $\left|\xi_{0}\right|=1$, such that
\[
\int_{\mathbb{R}^{d}}\left(x\cdot\xi_{0}\right)^{2}d\mu\left(x\right)>0.
\]
This yields the existence of a cone $\Gamma$ around the
direction $\xi_{0}$ such that for every $\xi\in\Gamma$ we have
\[
\int_{\mathbb{R}^{d}}\left(x\cdot\xi\right)^{2}d\mu\left(x\right)\geq c\left|\xi\right|^{2}.
\]
 By \eqref{eq:Taylor} we have
\begin{equation}
1-\left|\widehat{\mu}\left(\xi\right)\right|^{2}=-H_{\widehat{\mu}}(0)\xi\cdot\xi+\mathcal{O}\left(|\xi|^{3}\right)\geq c_{1}\left|\xi\right|^{2}+\mathcal{O}\left(\left|\xi\right|^{3}\right)\geq\frac{1}{2}c_{1}\left|\xi\right|^{2}\label{eq:Taylor cono}
\end{equation}
 for $\xi\in\Gamma$, $\left|\xi\right|\leq c_{2}$, where $c_{2}$
is a sufficiently small constant. Since
\begin{align*}
D^{2}_{\mu^{\left(\varepsilon\right)}}\left(N\right)= & \ N\sum_{m\neq0}\left(1-\left|\widehat{\mu^{\left(\varepsilon\right)}}\left(m\right)\right|^{2}\right)\left|\widehat{\chi_{rB}}\left(m\right)\right|^{2} \\
&+N^{2}\sum_{h\neq0}\left|\widehat{\chi_{rB}}\left(Mh\right)\right|^{2}\left|\widehat{\mu^{\left(\varepsilon\right)}}\left(Mh\right)\right|^{2}\\
\geq & \ N\sum_{m\neq0}\left(1-\left|\widehat{\mu}\left(\varepsilon m\right)\right|^{2}\right)\left|\widehat{\chi_{rB}}\left(m\right)\right|^{2} \ ,
\end{align*}
then \eqref{eq:Taylor cono} and \eqref{Bes} yield
\begin{align*}
D^{2}_{\mu^{\left(\varepsilon\right)}}\left(N\right)\geq & \ c_{3}N\varepsilon^{2}\sum_{\substack{m\in\Gamma\\
0<\left|\varepsilon m\right|\leq c_{2}
}
}\left[\left|m\right|^{1-d}\cos^{2}\left(2\pi r\left|m\right|-\frac{d+1}{4}\pi\right)\right. \\
&\left.
+\mathcal{O}\left(\left|m\right|^{-d}\right)\right]\\
\geq & \ c_{3}N\varepsilon^{2}\sum_{\substack{m\in\Gamma\\
0<\left|\varepsilon m\right|\leq c_{2}
}
}\left|m\right|^{1-d}\cos^{2}\left(2\pi r\left|m\right|-\frac{d+1}{4}\pi\right)\\
&+\mathcal{O}\left(N\varepsilon^{2}\sum_{\substack{m\in\Gamma\\
0<\left|\varepsilon m\right|\leq c_{2}
}
}\left|m\right|^{-d}\right).
\end{align*}
For the first term, in order to stay away from the zeros of
\[
\cos\left(  2\pi r\left\vert m\right\vert -\frac{d+1}{4}\pi\right),
\]
we introduce, for every fixed $0<w\leq1$, and $K$ large positive integer, the
$d$-dimensional annulus
\[
A_{K,w}=\left\{  t\in\mathbb{R}^{d}:\frac{K+\left(  d+1\right)  /4-w}%
{2r}<\left\vert t\right\vert \leq\frac{K+\left(  d+1\right)  /4+w}%
{2r}\right\}  \ ,
\]
so that \ $\cos^{2}\left(  2\pi r\left\vert m\right\vert -(d+1)\pi/{4}\right)  \geq1/2$ \ whenever $m\in A_{K,1/4}$. Since the width $w/r$ of the
annulus is fixed, then the number of integer points in the annulus is known to be 
asymptotic to its volume. This comes from looking
at the annulus as the difference of two balls, and then observing that for
every $d\geq2$ we have (see e.g. \cite{Kratzel})
\[
K^{d}\left\vert B\right\vert -\operatorname*{card}\left(  KB\cap\mathbb{Z}%
^{d}\right)  =\mathit{o}\left(  K^{d-1}\right)
\]
(for recent results on lattice points in annuli see e.g. \cite{PaSi} or
\cite{CGG}). Therefore, as $K\rightarrow+\infty$, we have
\[
\operatorname*{card}\left(  A_{K,w}\cap\mathbb{Z}^{d}\right)  \sim\left\vert
A_{K,w}\right\vert \ .
\]
Then
\[
\operatorname*{card}\left(  A_{K,1/4}\cap\mathbb{Z}^{d}\right)  \approx
\operatorname*{card}\left(  A_{K,1}\cap\mathbb{Z}^{d}\right)  \ .
\]
Therefore we obtain
\begin{align*}
&\sum_{\substack{m\in\Gamma\\
0<\left|\varepsilon m\right|\leq c_{2}
}
}\left|m\right|^{1-d}\cos^{2}\left(2\pi r\left|m\right|-\frac{d+1}{4}\pi\right)\\
&\geq c_4\sum_{
0<\left|\varepsilon m\right|\leq c_{2}
}\left|m\right|^{1-d}\cos^{2}\left(2\pi r\left|m\right|-\frac{d+1}{4}\pi\right)\\
&\geq
c_5\int_1^{c_2/\varepsilon}dx=\frac{c_6}{\varepsilon}. 
\end{align*}
Hence
\[
D^{2}_{\mu^{\left(\varepsilon\right)}}\left(N\right)\geq c_3c_{6}N\varepsilon+\mathcal{O}\left(N\varepsilon^{2}\log\varepsilon\right)\geq c_{7}N\varepsilon.
\]
Therefore, choosing  $\varepsilon_M=\Omega\left(M^{-1}\right)$ and $\mu_M=\mu^{\left(\varepsilon_M\right)}$  we have
\[
D^{2}_{\mu_M}\left(N\right)=\Omega\left(N^{1-1/d}\right).
\]
\end{proof}
\begin{proof}[Proof of Proposition \ref{prop:no_rescaling}]
Let $0<\alpha<1$, let $t_{0}\in\mathbb{T}^{d}$, with $\varepsilon=\left|t_{0}\right|$,
and let
\[
\mu=\left(1-\alpha\right)\delta_{0}+\alpha\delta_{t_{0}}.
\]
 Then
\begin{align*}
1-\left|\widehat{\mu}\left(m\right)\right|^{2}= & 1-\left|1-\alpha+\alpha e^{2\pi it_{0}\cdot m}\right|^{2}\\
= & 1-\left[1-\alpha+\alpha\cos\left(2\pi t_{0}\cdot m\right)\right]^{2}-\left[\alpha\sin\left(2\pi t_{0}\cdot m\right)\right]^{2}\\
= & 1-\left(1-\alpha\right)^{2}-2\left(1-\alpha\right)\alpha\cos\left(2\pi t_{0}\cdot m\right)-\alpha^{2}\\
= & 4\alpha\left(1-\alpha\right)\sin^{2}\left(\pi t_{0}\cdot m\right).
\end{align*}
By \eqref{discr} and \eqref{fatto} we have
\begin{align*}
 D^2_{\mu}\left(N\right) = & N\sum_{m\neq0}\left(1-\left|\widehat{\mu}\left(m\right)\right|^{2}\right)\left|\widehat{\chi}_{rB}\left(m\right)\right|^{2}\\
&+N^2\sum_{h\neq0}\left|\widehat{\chi}_{rB}\left(Mh\right)\right|^{2}\left|\widehat{\mu}\left(Mh\right)\right|^{2}\\
\leq  & c_1N\alpha\sum_{m\neq0}\left(\sin^{2}\left(\pi t_{0}\cdot m\right)\right)\left|m\right|^{-d-1}+c_2 N^{1-1/d}\\
\leq  &c_1N\alpha\sum_{0<\left|m\right|\leq\varepsilon^{-1}}\varepsilon^{2}\left|m\right|^{-d+1}+c_1N\alpha\sum_{\left|m\right|>\varepsilon^{-1}}\left|m\right|^{-d-1}+c_2 N^{1-1/d}\\
\leq  & c_1 N\alpha\varepsilon+c_2 N^{1-1/d}, 
\end{align*}
with $c_1,c_2$ independent of $\mu,\alpha,N$ and $\varepsilon$. 

Now choose $M^{-1}<\varepsilon_M<1$. Let $\alpha_{M}=\left(M\varepsilon_{M}\right)^{-1}$
and let 
\[\mu_{M}=\left(1-\alpha_{M}\right)\delta_{0}+\alpha_{M}\delta_{t_{M}},\]
where $t_{M}$ is any point in $\mathbb{T}^{d}$ with $\left|t_{M}\right|=\varepsilon_{M}$.
Then
\[
D^{2}_{\mu_{M}}\left(N \right) \leq  c_1 N\left(M\varepsilon_{M}\right)^{-1}\varepsilon_{M}+  c_2 N^{1-1/d}\leq  c_3 N^{1-1/d} \ .
\]
\end{proof}

\end{document}